\documentclass[a4paper,10pt]{amsart}
\usepackage[czech,english]{babel}
\usepackage{amsmath,amssymb,amsthm,amscd}
\usepackage[all]{xy}


\newcommand{\ModR}{\hbox{{\rm Mod-}}R}

\newcommand{\Add}{\mathrm{Add}}


\DeclareMathOperator{\Hom}{Hom}
\DeclareMathOperator{\Dom}{Dom}
\DeclareMathOperator{\End}{End}
\DeclareMathOperator{\Ext}{Ext}

\DeclareMathOperator{\Ker}{Ker}
\DeclareMathOperator{\Img}{Im}

\DeclareMathOperator{\Soc}{Soc}

\DeclareMathOperator{\cf}{cf}



\usepackage[usenames]{color}

\theoremstyle{plain}
\newtheorem{thm}{Theorem}[section]
\newtheorem{prop}[thm]{Proposition}
\newtheorem{lem}[thm]{Lemma}
\newtheorem*{conj}{Conjecture}
\newtheorem{cor}[thm]{Corollary}
\theoremstyle{definition}

\newtheorem{exm}[thm]{Example}

\theoremstyle{remark}
\newtheorem*{rema}{Remark}

\begin{document}
\title{Enochs' conjecture for small precovering classes of modules}

\author{\textsc{Jan \v Saroch}}
\address{Charles University, Faculty of Mathematics and Physics, Department of Algebra \\ 
Sokolovsk\'{a} 83, 186 75 Praha~8, Czech Republic}
\email{saroch@karlin.mff.cuni.cz}

\keywords{Enochs' conjecture, covering class, perfect decomposition}

\thanks{Research supported by GA\v CR 20-13778S}

\subjclass[2020]{16D70 (primary), 03E35, 16B70, 16D10, 16S50 (secondary)}
\date{\today}

\begin{abstract} Enochs' conjecture asserts that each covering class of modules (over any fixed ring) has to be closed under direct limits. Although various special cases of the conjecture have been verified, the conjecture remains open in its full generality. In this short paper, we prove the validity of the conjecture for small precovering classes, i.e.\ the classes of the form $\Add(M)$ where $M$ is any module, under a mild additional set-theoretic assumption which ensures that there are enough non-reflecting stationary sets. We even show that $M$ has a perfect decomposition if $\Add(M)$ is a covering class. Finally, the additional set-theoretic assumption is shown to be redundant if there exists an $n<\omega$ such that $M$ decomposes into a direct sum of $\aleph_n$-generated modules.
\end{abstract} 

\maketitle
\vspace{4ex}

\section*{Introduction}
\label{sec:intro}

Precovers (and their dual counterpart preenvelopes) represent the basic tools in the approximation theory of modules. A class $\mathcal C\subseteq\ModR$ is called \emph{precovering} if each $M\in\ModR$ possesses a \emph{$\mathcal C$-precover}, i.e.\ a homomorphism $f:C\to M$ where $C\in \mathcal C$ and such that $\Hom_R(C^\prime,f)$ is surjective for each $C^\prime\in\mathcal C$. Depending on the class $\mathcal C$, the $\mathcal C$-precovers are often epimorphisms but, in general, they need not be. Rada and Saorín in \cite{RS0} noticed that a class $\mathcal C$ is precovering if and only if its closure under direct summands is precovering. In other words, when studying precovering classes, we can concentrate on the ones closed under direct summands. It is an easy exercise to show that, in this case, they are closed under direct sums, too. As a~consequence, we can see that, unless $\mathcal C$ contains only the zero module, $\mathcal C$-precovers are not unique by any means.

Some precovering classes, though, provide us with minimal versions of precovers called covers. A $\mathcal C$-precover $f:C\to M$ satisfying that each $g\in\End_R(C)$ such that $fg = f$ is an automorphism of $C$ is called a \emph{$\mathcal C$-cover} of $M$. A class $\mathcal C\subseteq\ModR$ is \emph{covering} if all modules have $\mathcal C$-covers. The $\mathcal C$-covers, if exist, are unique up to isomorphism. However, they still need not be surjective. And also, even if $\mathcal C$-covers exist, they need not be functorial.

Precovering and preenveloping classes play an important role in relative homological algebra. They allow us to define (relative) resolutions and coresolutions and also to introduce meaningful notions of (relative homological) dimensions in some cases. Their minimal versions, covers and envelopes, are used, for instance, in the definition of Bass' invariants over commutative noetherian rings, or Xu's dual Bass invariants over Gorenstein rings.

The basic examples of precovering classes are $\mathcal P_0$ and $\mathcal F_0$, i.e.\ the class of all projective and flat modules, respectively. In a more general fashion, given a set $\mathcal S\subseteq\ModR$, the class $\Add(\mathcal S)$ of all direct summands of arbitrary direct sums of modules from $\mathcal S$ is precovering by the Quillen's small object argument, cf.\ \cite[Corollary~3.7]{RS0}. Of course, in this case $\Add(\mathcal S) = \Add(\{\bigoplus_{M\in\mathcal S} M\})$ where we usually drop the braces in the latter expression and write simply $\Add(\bigoplus_{M\in\mathcal S}M)$.

Precovering classes of the form $\Add(M)$ where $M\in \ModR$ (or equivalently $\Add(\mathcal S)$ where $\mathcal S$ is a \emph{subset} of $\ModR$) will be called \emph{small precovering classes} in this paper. Given an infinite cardinal $\kappa$, it follows immediately from the well-known Walker's lemma that each module in $\Add(\mathcal S)$ is a direct sum of $\kappa$-presented modules provided that all modules in $\mathcal S$ have this property.

The rings where $\mathcal P_0 = \Add(R_R)$ is covering are called \emph{right perfect}. The class $\mathcal F_0$ is covering\footnote{Note, however, that $\mathcal F_0$ is not a small precovering class unless it coincides with $\mathcal P_0$.} over any ring $R$ as shown in the famous paper \cite{BBE0} by Bican, El Bashir and Enochs. Among other things, Enochs proved that a precovering class closed under direct limits is covering, cf.\ \cite[Theorem~5.31]{GT0}. The question whether the converse implication holds as well is known as the Enochs' conjecture.

\begin{conj}\label{conj:Enochs} (Enochs) Every covering class of modules is closed under direct limits.
\end{conj}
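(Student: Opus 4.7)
The conjecture is wide open in full generality, so any realistic plan must restrict the class. The tractable case targeted in this paper is $\mathcal C=\Add(M)$ for a single module $M$. Following the abstract, I would aim for the stronger statement: if $\Add(M)$ is covering, then $M$ admits a perfect decomposition. Since a perfect decomposition of $M$ already forces $\Add(M)$ to be closed under direct limits (a consequence of the classical characterisations of perfect decompositions), this would settle the restricted conjecture.

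The argument goes by contrapositive: assume $M$ has no perfect decomposition. By the characterisation due to Angeleri H\"ugel and Saor\'{\i}n, this means there is a countable sequence of morphisms between summands of $M$ that refuses to stabilise in an idempotent sense; equivalently, one extracts a direct system $(N_i,f_{ij})_{i<\omega}$ in $\add(M)$ whose direct limit $N$ detects the failure of semi-$T$-nilpotence in the endomorphism ring $\End_R(\bigoplus_i N_i)$. This module $N$ lies outside $\Add(M)$, and any hypothetical $\Add(M)$-cover of $N$ would be forced, by the automorphism clause in the definition of a cover, to be too rigid to accommodate the infinite chain of non-invertible maps.

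The technical heart of the plan is a transfinite amplification of this local obstruction. Fix an uncountable regular cardinal $\kappa$ carrying a non-reflecting stationary set $E\subseteq\kappa$, say of points of some fixed small cofinality; this is where the set-theoretic hypothesis enters. Along $\kappa$ I would glue copies of the countable system above, indexed by $E$, into a continuous chain whose union is a module $L$ of cardinality $\kappa$ that is filtered by submodules from $\Add(M)$. The purpose of non-reflection is to concentrate the failures of perfectness precisely on $E$, while outside $E$ the system remains ``patchable'' by summands of $M$, so that $L$ stays well-behaved at singular approximations. Supposing now that a cover $f\colon C\to L$ exists, Walker's lemma decomposes $C$ into countably presented summands of $M$, and a back-and-forth across the two filtrations produces an endomorphism $g\in\End_R(C)$ with $fg=f$ which is the identity on a club but fails to be injective (or surjective) on the pieces indexed by $E$; hence $g$ is not an automorphism, contradicting the cover property.

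The main obstacle is precisely that back-and-forth step: aligning the combinatorics of $E$ with the algebraic obstruction so that the produced $g$ really satisfies $fg=f$ and is still forced to be non-invertible, at a stationary set of coordinates, by cover uniqueness. Once this is in hand, the remainder is cardinal bookkeeping: if $M=\bigoplus_i M_i$ with each $M_i$ being $\aleph_n$-generated, then all constructions can be carried out inside $\aleph_{n+1}$, where a non-reflecting stationary subset of $\aleph_n$-cofinal points exists already in ZFC, so the additional set-theoretic hypothesis becomes redundant in this case.
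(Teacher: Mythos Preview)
Your high-level strategy---restrict to $\Add(M)$, argue by contrapositive from the failure of perfect decomposition, and amplify the obstruction along a non-reflecting stationary set---matches the paper's. But the mechanism by which you extract a contradiction from the covering hypothesis is not the paper's, and the step you flag as ``the main obstacle'' is precisely where the paper takes a different route.

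You propose to take a putative cover $f\colon C\to L$ and manufacture, by a back-and-forth between two filtrations, an endomorphism $g$ of $C$ with $fg=f$ that fails to be an automorphism. The paper never attempts this. Instead it uses a general lemma of Bazzoni--Positselski--\v{S}\v{t}ov\'{\i}\v{c}ek (Proposition~\ref{p:localsplit}): if a surjective $\mathcal C$-precover has locally split kernel and the target has a $\mathcal C$-cover, then the precover already splits. The paper then builds a module $Z$ as the union of a $\kappa^+$-continuous chain $(M_\alpha)_{\alpha<\kappa^+}$ of modules in $\Add(M)$, so that the canonical map $\bigoplus_\alpha M_\alpha\to Z$ is an $\Add(M)$-precover with locally split kernel; the proposition forces $Z\in\Add(M)$, and Walker's lemma gives a second filtration whose club of agreement with the first meets $E$, contradicting a non-splitting built in at $E$. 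Your proposed back-and-forth is thus replaced by an appeal to an existing black box; without that box, producing the non-automorphism $g$ you want looks genuinely hard, and your sketch gives no indication of how to do it.

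Two smaller points. First, you reduce the obstruction to a \emph{countable} chain via semi-$T$-nilpotence; the paper instead starts from a local direct summand $\bigoplus_{i\in I}N_i$ that is not a summand, and the index set may have any regular cardinality $\lambda\le\mu$ (where $M$ is a sum of $\mu$-presented modules). The stationary set $E$ is then taken in $\kappa^+$ at points of cofinality $\lambda$, and an Eilenberg-trick homogenisation is needed to make the gluing work. Second, the ZFC result for $M$ a sum of $\aleph_n$-generated modules is \emph{not} obtained by running the same filtration argument inside $\aleph_{n+1}$; the paper uses a separate tree-like construction (Lemma~\ref{l:ZFCind}, after Eklof--Shelah) to raise the index cardinality of the obstruction step by step until it exceeds $\aleph_n$, and then applies Proposition~\ref{p:localsplit} directly.
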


This conjecture has been verified for various special types of classes: from the Bass' famous Theorem~P, \cite{Ba0}, we know that it holds for $\mathcal P_0$ (i.e.\ the existence of projective covers implies that $\mathcal P_0$ is closed under direct limits); recently, Bazzoni and Le Gross in \cite{BL0} verified the conjecture for the class $\mathcal P_1$ of modules of projective dimension at most $1$ over a commutative semihereditary ring; it is shown in \cite[Theorem~5.2]{AST} that Enochs' conjecture holds for each class $\mathcal A$ which appears in a~cotorsion pair $(\mathcal A,\mathcal B)$ with $\mathcal B$ closed under direct limits. A promising contramodule-based approach presented in \cite{BP} gives a positive solution for some special cases of small precovering classes. However, the general case of the conjecture still remains open.

\smallskip

In this short paper, we verify the Enochs' conjecture, under an additional incompactness set-theoretic assumption, for all small precovering classes, see Theorem~\ref{t:AddEnochs}. In fact, we will even show a~little bit more: the module $M$ has to have a~perfect decomposition provided that $\Add(M)$ is covering. Also the additional set-theoretic assumption is not needed if there exists an $n<\omega$ such that $M$ is a~direct sum of $\aleph_n$-generated modules, see Corollary~\ref{c:broader}.

\section{Preliminaries}
\label{sec:prelim}

Throughout this paper, $R$ denotes an associative unital ring and $\ModR$ the category of all (right $R$-)modules and homomorphisms between them. We note, however, that our results generalize in a straightforward way to the category of unitary modules over a ring with enough idempotents and potentially also to arbitrary finitely accessible additive category.

Unless otherwise stated, we work in the classical realm of ZFC, i.e.\ Zermelo--Fraenkel set theory with the axiom of choice. We also use the usual definitions of ordinal numbers, cardinal numbers, cofinality and stationary subsets (in particular, $\omega$ denotes the least infinite ordinal number). The reader unfamiliar with some of these notions is encouraged to consult the monograph \cite{J0} or \cite{EM0}.

Given an infinite regular cardinal $\lambda$, we say that a partially ordered set $(I,\leq)$ is \emph{$\lambda$-directed} provided that each subset $J\subseteq I$ with $|J|<\lambda$ has an upper bound in $(I,\leq)$. A system $\mathcal M = (M_i,f_{ji}:M_i\to M_j \mid i\leq j\in I)$ of modules and homomorphisms between them indexed by a $\lambda$-directed poset $(I,\leq)$ is called \emph{$\lambda$-directed} as well. If $\lambda = \aleph_0$, we often omit $\lambda$.

For a $\lambda$-directed system $\mathcal M$ as above, we have the canonical short exact sequence $0\to K\to \bigoplus_{i\in I}M_i \overset{\pi}{\to} \varinjlim\mathcal M \to 0$ where the epimorphism $\pi$ is \emph{$\lambda$-pure}, i.e.\ it has the property that $\Hom_R(F,\pi)$ is surjective whenever $F$ is a $<\lambda$-presented module.

Recall that a \emph{filtration} of a module $M$ is a sequence $\mathfrak F = (M_\alpha\mid \alpha\leq \sigma)$ of modules indexed by an ordinal $\sigma$ such that $M_0 = 0$, $M_\sigma = M$, $M_\alpha\subseteq M_\beta$ for each $\alpha<\beta\leq\sigma$ and $M_\alpha = \bigcup_{\beta<\alpha} M_\beta$ for each limit ordinal $\alpha\leq\sigma$. Moreover, we say that $\mathfrak F$ is \emph{splitting} if $M_\alpha$ is a direct summand in $M_\sigma$ for each $\alpha<\sigma$. Notice that, if $\mathfrak F$ is splitting, then $M\cong \bigoplus_{\alpha<\sigma} M_{\alpha+1}/M_\alpha$.

We say that a module $M$ \emph{has a perfect decomposition} if every local direct summand in a module from $\Add(M)$ is a direct summand. We call a submodule $\bigoplus_{i\in I} N_i$ in $N$ a \emph{local direct summand}\footnote{See also the notion of a quasi-split monomorphism in \cite{BPS0}.} in $N$ if the subsum $\bigoplus_{i\in J} N_i$ is a direct summand in~$N$ for each finite $J\subseteq I$. This notion was studied, for instance, in \cite{GG0} and~\cite{AS0}. In particular, it follows from \cite[Corollary~2.3]{GG0} that a module with perfect decomposition has a decomposition in modules with local endomorphism ring; and by \cite[Theorem~1.4]{AS0}, we know that $\Add(M)$ is closed under direct limits if $M$ has a~perfect decomposition. Thus, for any $M\in\ModR$, we have
\[M\mbox{ has a perf.\ decom. } \Longrightarrow\Add(M)\mbox{ is closed under }\varinjlim \Longrightarrow\Add(M)\mbox{ is covering}\]
and we are going to show that, under a mild additional set-theoretic assumption, the converses of these implications hold as well.

\begin{exm} \label{e:ex} 
\begin{enumerate}
	\item Since any local direct summand is a pure submodule, an $M\in\ModR$ has a perfect decomposition provided that it is $\Sigma$-pure-split,\ i.e.\ every pure submodule of a direct sum of copies of $M$ is a direct summand; e.g.\ $R_R$ is $\Sigma$-pure-split if and only if $R$ is right perfect, if and only if $R_R$ has a perfect decomposition. Let us note, however, that there exist modules with perfect decomposition which are not $\Sigma$-pure-split.

  \item Consider the boolean ring $R = \mathcal P(\omega)$ where, as usual, the addition is the symmetric difference and the multiplication is the intersection. Then $\bigoplus_{a\in\omega} \{a\}R = \Soc R$ is a local direct summand in the regular module $R$ which is not a direct summand in~$R$. So $R$ does not have a perfect decomposition. Of course, this is not surprising: otherwise $\Add(R) = \mathcal P_0$ would be closed under direct limits, and $R$ would be semisimple artinian.
\end{enumerate}
\end{exm}

\section{The main theorem}
\label{sec:main}

We start with a simple, yet very useful proposition which allows us to capitalize on the covering assumption. It is due to Bazzoni, Positselski and Šťovíček. Recall that a~morphism $m:K\to M$ is called \emph{locally split} provided that \[(\forall x\in K)(\exists h\in\Hom_R(M,K))\, h(m(x))=x.\] In particular, if $K = \bigoplus_{i\in I}N_i$ is a local direct summand in a module $N$, then the inclusion $K\hookrightarrow N$ is locally split.

\begin{prop} \label{p:localsplit} Let $\mathcal C\subseteq\ModR$ and $f\in\Hom_R(N, L)$ be a surjective $\mathcal C$-precover with a~locally split kernel. If $L$ has a $\mathcal C$-cover, then $L\in\mathcal C$ and the epimorphism $f$ splits.
\end{prop}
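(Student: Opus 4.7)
The plan is to use the $\mathcal C$-cover $g\colon C\to L$ together with the surjective precover $f$ to produce compatible liftings, and then to exploit the locally split kernel to collapse $\Ker g$ to zero. Since $f$ is a $\mathcal C$-precover and $C\in\mathcal C$, there is $h'\in\Hom_R(C,N)$ with $fh'=g$; dually, since $g$ is a $\mathcal C$-precover and $N\in\mathcal C$, there is $h\in\Hom_R(N,C)$ with $gh=f$. Composing, $g(hh')=fh'=g$, so the cover property forces $hh'\in\End_R(C)$ to be an automorphism. In particular $h'$ is split monic, $h$ is split epic, and the idempotent $e=h'(hh')^{-1}h$ yields a decomposition $N=h'(C)\oplus\Ker h$ with $\Ker h\subseteq\Ker f$.

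The heart of the argument is to show $\Ker g=0$. Suppose for contradiction that there is a nonzero $c\in\Ker g$. Then $x:=h'(c)$ is nonzero (as $h'$ is injective) and lies in $\Ker f$, since $f(x)=g(c)=0$. Writing $\iota\colon\Ker f\hookrightarrow N$ for the inclusion, the locally split hypothesis provides $\sigma\colon N\to\Ker f$ with $\iota\sigma(x)=x$. Consider the twisted lift $\tilde h:=h'-\iota\sigma h'\colon C\to N$. Because $\iota\sigma$ lands in $\Ker f$, we still have $f\tilde h=g$, so $g(h\tilde h)=f\tilde h=g$ and the cover property again forces $h\tilde h$ to be an automorphism of $C$. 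On the other hand, $\tilde h(c)=h'(c)-\iota\sigma(x)=x-x=0$, so $h\tilde h(c)=0$, contradicting injectivity. Hence $g$ is an isomorphism and $L\cong C\in\mathcal C$. Consequently $\Ker h=\Ker(gh)=\Ker f$, the decomposition reads $N=h'(C)\oplus\Ker f$, and $s:=h'g^{-1}\colon L\to N$ is a section of $f$, which is therefore split.

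The main obstacle I anticipate is designing the right perturbation of $h'$: the bare application of cover minimality to $hh'$ only yields invertibility of $hh'$, which is not enough to see that $\Ker g=0$. Given a hypothetical nonzero $c\in\Ker g$, one uses the local splitting at the single element $x=h'(c)$ to modify $h'$ to a new lift $\tilde h$ of $g$ which still factors through $f$ but annihilates $c$; the resulting endomorphism $h\tilde h$ of $C$ is simultaneously forced by cover minimality to be invertible and by construction to kill $c$, giving the desired contradiction. This twist of the lift along $\iota\sigma$ is precisely what local splitting enables.
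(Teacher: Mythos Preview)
Your argument is correct. The paper itself does not prove this proposition at all: it simply refers the reader to \cite[Corollary~5.3]{BPS0}, where the result is established in the generality of Ab5 categories. So there is no real proof in the paper to compare against, and your write-up supplies a complete, self-contained module-theoretic argument.

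A brief comment on what your approach buys versus the citation. The cited result is more general (arbitrary Ab5 categories), but your proof makes transparent exactly where each hypothesis enters: the precover property of $f$ and of $g$ gives the two liftings $h,h'$; minimality of the cover $g$ forces $hh'$ and $h\tilde h$ to be automorphisms; and the locally split kernel is used at a single element $x=h'(c)$ to produce the perturbed lift $\tilde h=h'-\iota\sigma h'$ that kills $c$ while remaining a lift of $g$. This is precisely the mechanism hidden behind the citation, and it is nice to see it spelled out. One could add, for clarity, that $N\in\mathcal C$ holds by the very definition of a $\mathcal C$-precover, since you use this when invoking the precover property of $g$ to obtain $h$.
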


\begin{proof} See \cite[Corollary~5.3]{BPS0} where the authors prove a general version for Ab5 categories.
\end{proof}

Apart from Proposition~\ref{p:localsplit}, we shall use the following additional set-theoretic

\medskip

\noindent\textbf{Assumption $(*)$.} \textit{There is a proper class of cardinals $\kappa$ such that each stationary set $E^\prime\subseteq\kappa^+$ has a non-reflecting stationary subset $E$.}

\medskip

Recall that a subset $E\subseteq\kappa^+$ is \emph{non-reflecting} if $E\cap\alpha$ is non-stationary in $\alpha$ whenever $\alpha<\kappa^+$ is a limit ordinal. The principle $(*)$ is consistent with ZFC since it holds, for instance, in the constructible universe or, more generally, in absence of~$0^\#$. It brings some incompactness to the universe of sets: simply put, it postulates the existence of many `large' (i.e.\ stationary) sets which are locally `small' (i.e.\ non-stationary).

The construction in the proof of our main theorem below is inspired by the one from \cite[Theorem~VII.2.3]{EM0}. We do not know whether Theorem~\ref{t:AddEnochs} can be proved in ZFC alone; it is possible for some special cases of $M$ as showed in Corollary~\ref{c:broader}, however, the general case is open as is the Enochs' conjecture~itself.

\begin{thm} \label{t:AddEnochs} Assume that $(*)$ holds true. Let $M\in\ModR$ be such that each module in $\varinjlim\Add(M)$ has an~$\Add(M)$-cover. Then $M$ has a perfect decomposition. In particular, $\Add(M)$ is closed under direct limits.
\end{thm}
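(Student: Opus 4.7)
The plan is to argue by contradiction. Suppose $M$ does not have a perfect decomposition, so there exist $N\in\Add(M)$ and a local direct summand $K=\bigoplus_{i\in I}N_i\subseteq N$ which is \emph{not} a direct summand of $N$. The canonical short exact sequence $0\to K\to N\to N/K\to 0$ then has locally split kernel (by the remark preceding Proposition~\ref{p:localsplit}) but does not split. Since every finite subsum $\bigoplus_{i\in J}N_i$ is a direct summand of $N$, the quotient $N/K$ is a directed colimit of $\Add(M)$-modules, so $N/K\in\varinjlim\Add(M)$. Were $N\to N/K$ itself an $\Add(M)$-precover of $N/K$, Proposition~\ref{p:localsplit} together with the cover hypothesis would already produce a splitting, contradicting the choice of $K$. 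In general, however, this map need not be a precover, and the bulk of the argument consists in enlarging the pathology to one that \emph{is}.

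To that end, first reduce to a minimal witness $(N,K,I)$ of bounded presentation size $\theta$. Invoking $(*)$, pick a regular cardinal $\kappa>\theta$ together with a non-reflecting stationary set $E\subseteq\kappa^+$ of ordinals of countable cofinality. Then, in the spirit of \cite[Theorem~VII.2.3]{EM0}, construct a continuous filtration $(L_\alpha\mid\alpha\le\kappa^+)$ of a module $L$ as follows. At successor stages $\alpha+1$ and at limit stages $\alpha\notin E$, arrange the embedding $L_\beta\hookrightarrow L_\alpha$ to be split for every $\beta<\alpha$, which, using non-reflection of $E$ on $\alpha$, ensures inductively that $L_\alpha\in\Add(M)$. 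At each $\alpha\in E$ paste in a fresh copy of the problematic configuration $(N,K)$, so that a new locally-split-but-not-split submodule is introduced into $L_\alpha$. Since $E$ is non-reflecting, every proper initial segment $L_\alpha$ ($\alpha<\kappa^+$) lies in $\Add(M)$, and consequently $L=L_{\kappa^+}\in\varinjlim\Add(M)$.

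Now put $N^{*}=\bigoplus_{\alpha<\kappa^+}L_{\alpha+1}/L_\alpha\in\Add(M)$ with the canonical surjection $f\colon N^{*}\twoheadrightarrow L$. Its kernel splits off as $\bigoplus_{\alpha\in E}K_\alpha$, where $K_\alpha$ is the inserted copy of $K$ at stage $\alpha$; by construction this is a local direct summand of $N^{*}$, so $\ker f$ is locally split. A cofinality-and-lifting argument shows that $f$ is an $\Add(M)$-precover of $L$: any morphism $C\to L$ from $C\in\Add(M)$ factors (after adjustment) through some $L_{\alpha+1}$ with $\alpha<\kappa^+$, and the direct summand inclusion $L_{\alpha+1}\hookrightarrow N^{*}$ then provides the required lift. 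By the hypothesis that $L$ admits an $\Add(M)$-cover, Proposition~\ref{p:localsplit} forces $f$ to split. But a splitting of $f$, analysed via the standard $\Gamma$-invariant / stationary-set bookkeeping of \cite[Theorem~VII.2.3]{EM0}, would entail that on a club of $\alpha<\kappa^+$ the insertion $K_\alpha$ is a direct summand of the relevant piece of $L$; intersecting the club with the stationary $E$ contradicts the very design of the $E$-steps. The contradiction produces the perfect decomposition of $M$, and \cite[Theorem~1.4]{AS0} then yields the closure of $\Add(M)$ under direct limits.

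The principal obstacle is the combinatorial engineering of the filtration: one must simultaneously keep successor- and non-$E$-limit stages splitting (to secure $L_\alpha\in\Add(M)$), insert genuine non-split copies of $(N,K)$ exactly at $E$, ensure the resulting $f$ is a precover rather than merely a surjection from $\Add(M)$, and deploy stationarity of $E$ to preclude a global splitting. Balancing these requirements against the cofinality constraints dictated by $E$, and verifying that the $\Gamma$-invariant argument from \cite{EM0} adapts from freeness to membership in $\Add(M)$, is where the hypothesis $(*)$ is genuinely used and where nearly all the technical work resides.
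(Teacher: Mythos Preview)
Your overall strategy matches the paper's, but several concrete technical choices are wrong and the argument fails as written.

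First, you take $E\subseteq\kappa^+$ to consist of ordinals of \emph{countable} cofinality. The correct choice is cofinality $\lambda$, where $\lambda$ is the (regular) cardinality of the index set of the minimal witness $K=\bigoplus_{i<\lambda}N_i$. At $\beta\in E$ one must identify $L_\beta$ with $K\oplus H^{(\lambda)}$ via an isomorphism carrying a cofinal $\lambda$-sequence in $\beta$ to the partial sums $K_\delta=\bigoplus_{i<\delta}N_i$; minimality then makes each $K_\delta$ a summand of $N$, which is exactly what secures that $L_\gamma$ still splits in $L_{\beta+1}$ for $\gamma<\beta$ \emph{while} $L_\beta$ does not. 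If $\cf(\beta)=\omega<\lambda$ this matching is impossible, since a countable sequence of $K_\delta$'s cannot exhaust $K$ for regular $\lambda>\omega$. Relatedly, you omit the Eilenberg homogenisation $H=(K\oplus N)^{(\kappa)}$; this is not cosmetic but the mechanism that makes every $L_\alpha$ and every consecutive factor isomorphic to the same module $H$, so that the filtration-respecting isomorphism $\iota$ and the pushout along $K\oplus H^{(\lambda)}\hookrightarrow N\oplus H^{(\lambda)}$ can be set up at all. Your ``paste in a fresh copy of $(N,K)$'' is too vague to deliver both requirements simultaneously without this device.

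Second, your map $f\colon N^*=\bigoplus_\alpha L_{\alpha+1}/L_\alpha\to L$ is ill-defined: consecutive factors of a filtration that is non-splitting at the $E$-steps carry no canonical map to the colimit, and the claimed description of $\ker f$ as $\bigoplus_{\alpha\in E}K_\alpha$ has no meaning here. The paper uses instead the directed-colimit map $z\colon\bigoplus_{\alpha<\kappa^+}L_\alpha\to L$, whose kernel is locally split by \cite[Lemma~2.1]{GG0}; the precover property follows since $M$ is a direct sum of $\kappa$-generated modules, so every map from $M$ to $L$ already lands in some $L_\alpha$. Finally, no $\Gamma$-invariant bookkeeping is needed for the contradiction: once Proposition~\ref{p:localsplit} gives $L\in\Add(M)$, Walker's lemma produces a second, splitting filtration $(L'_\alpha)$ of $L$ by $\kappa$-presented summands, the set $\{\alpha:L_\alpha=L'_\alpha\}$ is a club, and any $\beta$ in its intersection with $E$ makes $L_\beta$ a summand of $L_{\beta+1}$, contradicting the design of the $E$-step.
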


\begin{proof} Let $\mu$ be an infinite cardinal such that $M$ is a direct sum of $\mu$-presented modules. Assume that $M$ does not have a perfect decomposition. Then there exists a local direct summand $K = \bigoplus_{i\in I} N_i$ in a module $N\in\Add(M)$ which is not a~direct summand there. By the Walker's lemma, we can w.l.o.g.\ assume that each $N_i$ is $\mu$-presented. Aiming for minimality, we can also assume that each submodule $\bigoplus_{i\in J} N_i$ where $|J|<|I|$ is an actual direct summand in $N$. If $|I|>\mu$, then the inclusion $\nu:K\hookrightarrow N$ is the directed union of a $\mu^+$-directed system consisting of split inclusions into $N$. It is thus trivially a locally split morphism. Moreover, since $M$ is a direct sum of $\mu$-presented modules, any homomorphism from $M$ into $N/K$ factorizes through the $\mu^+$-pure canonical projection $\pi: N\to N/K$. So $\pi$ is an $\Add(M)$-precover and Proposition~\ref{p:localsplit} implies that $\pi$ splits, a~contradiction.

Thus $\aleph_0\leq |I| \leq \mu$. If $|I|$ is singular, we write $I$ as a disjoint union $\bigcup_{j\in J} I_j$ of subsets $I_j$ where $|J|=\cf(|I|)$ is regular and $|I_j|<|I|$ for each $j\in J$; then $\bigoplus_{j\in J} \bigl(\bigoplus_{i\in I_j} N_i\bigr)$ equals to~$K$, whence it is not a direct summand in $N$, however, it is still a local direct summand by the minimality of $I$. Hence we may assume without loss of generality that $I = \lambda \leq \mu$ is an infinite regular cardinal and $K$ is $\mu$-presented. Since $N$ is a direct sum of $\mu$-presented modules (again, by the Walker's lemma), we can also assume that $N$ itself is $\mu$-presented. Using $(*)$, we pick a suitable $\kappa\geq\mu$ and fix a non-reflecting stationary set $E\subseteq \kappa^+$ consisting of ordinal numbers with cofinality $\lambda$. (In $(*)$, take the stationary set $E^\prime = \{\alpha<\kappa^+\mid \cf(\alpha) = \lambda\}$.)

We apply a certain homogenization procedure based on the well-known Eilenberg's trick, i.e.\ on the fact that, if $A$ is a direct summand in $B$, then $A\oplus B^{(\nu)}\cong B^{(\nu)}$ whenever $\nu$ is infinite. Put $H = (K\oplus N)^{(\kappa)}$. This is a $\kappa$-presented module from $\Add(M)$. We define a splitting filtration $\mathfrak F = (M_\alpha \mid \alpha\leq\lambda)$ recursively by putting $M_{\alpha+1} = M_\alpha \oplus N_\alpha \oplus H$ and taking unions in the limit steps. Then $M_\lambda = K\oplus H^{(\lambda)} \subseteq N\oplus H^{(\lambda)}$. Notice that we have $N\oplus H^{(\lambda)}\cong H\cong M_{\alpha+1}/M_\alpha$ for each $\alpha<\lambda$, and $M_\alpha\cong H$ for each nonzero $\alpha\leq\lambda$, as a~consequence of the Eilenberg's trick. Also observe that \[M_\alpha\mbox{ is a direct summand in }N\oplus H^{(\lambda)}\mbox{ for each }\alpha<\lambda, \eqno{(\dagger)}\] whilst $M_\lambda$ does not split in $N\oplus H^{(\lambda)}$.

We are going to extend the filtration $\mathfrak F$ to a filtration $\mathfrak H = (M_\alpha \mid \alpha\leq\kappa^+)$. While defining it, we ensure that:

\begin{enumerate}
	\item[(i)] for each $0<\alpha<\kappa^+$, $M_\alpha\cong H\in\Add(M)$;
	\item[(ii)] for each $\beta<\alpha<\kappa^+$ with $\beta\notin E$, $M_\alpha = M_\beta \oplus G$ for some $G\subseteq M_\alpha$ isomorphic to $H$;
	\item[(iii)] if $\beta\in E$, then $M_\beta$ does not split in $M_{\beta+1}$.
\end{enumerate}

These conditions are clearly satisfied for the piece $\mathfrak F$ of $\mathfrak H$ we have constructed so far. Assume now that $\lambda<\alpha<\kappa^+$ and that $M_\beta$ is already defined for every $\beta<\alpha$. We distinguish the following three cases.

\begin{enumerate}
	\item $\alpha$ is limit: then we know that $E\cap \alpha$ is not stationary, thus we can find a~closed and unbounded subset $S$ of $\alpha$ such that $S\cap E = \varnothing$. We have to define $M_\alpha = \bigcup_{\gamma<\alpha} M_{\gamma} = \bigcup_{\gamma\in S} M_{\gamma}$. Since $\mathfrak G = (M_\gamma \mid \gamma\in S)$ is a~splitting filtration of $M_\alpha$ where the consecutive factors are isomorphic to $H$, we conclude that $M_\alpha \cong \bigoplus _{\gamma\in S} H \cong H$. This gives us (i). The condition (ii) then follows easily: given any $\beta<\alpha$ with $\beta\notin E$, we first find $\gamma \in S$ such that $\beta<\gamma$; then we use (ii) for $\beta<\gamma$ and the fact that $M_\gamma$ is a direct summand in $M_\alpha$ from the splitting filtration $\mathfrak G$.
	\item $\alpha = \beta + 1$ for $\beta\in E$: then we have a splitting filtration $\mathfrak G = (M_\gamma \mid \gamma\in S)$ of~$M_\beta$ from the previous limit step. Since $\beta\in E$, we have $\cf(\beta) = \lambda$, and so we can assume w.l.o.g.\ that $S$ has order type $\lambda$, i.e.\ we can enumerate $\mathfrak G = (M_{\gamma_\delta} \mid \delta<\lambda)$. By our construction, there exists an isomorphism $\iota:M_\lambda \to M_\beta$ such that $\iota\restriction M_\delta$ is an isomorphism onto $M_{\gamma_\delta}$ for each $\delta<\lambda$. Recall that $M_\lambda = K\oplus H^{(\lambda)}$. We define $M_\alpha$ using the pushout of $\iota$ and the inclusion $K\oplus H^{(\lambda)}\subseteq N\oplus H^{(\lambda)}$ which is non-split since $K$ is not a direct summand in $N$. Thus $M_\beta$ is not a direct summand in $M_\alpha\cong N\oplus H^{(\lambda)}\cong H$ either, and we have (i) and (iii) checked. The condition (ii) follows immediately from the property of $\iota$ and $(\dagger)$.
	\item $\alpha = \beta +1$ for $\beta\notin E$: in this case, we simply put $M_\alpha = M_\beta \oplus H$ and immediately check that the conditions (i)--(iii) are, indeed, satisfied for $\alpha$.
\end{enumerate}

Now, $Z = M_{\kappa^+}$ is the directed union of the $\kappa^+$-directed system $(M_\alpha \mid \alpha<\kappa^+)$ consisting of modules in $\Add(M)$. Since $M$ is a~direct sum of $\kappa$-presented modules (even $\kappa$-generated would suffice here), every $g\in\Hom_R(M,Z)$ factorizes through the canonical epimorphism $z:\bigoplus_{\alpha<\kappa^+}M_\alpha \to Z$ yielding that $z$ is an $\Add(M)$-precover. It is well-known that $\Ker(z)$ is locally split (cf.\ \cite[Lemma~2.1]{GG0}), whence we deduce that $z$ is a~split epimorphism and $Z\in\Add(M)$ by Proposition~\ref{p:localsplit}. By Walker's lemma, we know that $Z$ is a direct sum of $\kappa$-presented modules. This gives us a~filtration $\mathfrak H^\prime = (M_\alpha^\prime \mid \alpha\leq\kappa^+)$ of $Z$ such that $M_\alpha^\prime$ is a $\kappa$-presented direct summand in $Z$ for each $\alpha<\kappa^+$. It follows that the set $T = \{\alpha<\kappa^+ \mid M_\alpha = M_\alpha^\prime\}$ is closed and unbounded, whence we can pick a~$\beta \in T\cap E$. Then $M_\beta$ splits in $Z$, and so it splits in $M_{\beta+1}$, too, in contradiction with (iii).

We have proved that $M$ has a perfect decomposition. Finally, it follows from \cite[Theorem~1.4]{AS0} that $\Add(M)$ is closed under direct limits.
\end{proof}

Let us shortly sum up what we have done in the proof above. Assuming that $\Add(M)$ is not closed under direct limits, we know (e.g.\ from \cite[Theorem~1.4]{AS0}) that $M$ does not have a perfect decomposition. Hence there exists a local direct summand $K$ in a module $N\in\Add(M)$ which is not a direct summand. Using the non-split inclusion $K\subseteq N$ and our assumption $(*)$, we are able to build arbitrarily large modules $Z\in\varinjlim\Add(M)$ which locally look like elements of $\Add(M)$, i.e.\ each submodule $Y$ of~$Z$ with $|Y|<|Z|$ is contained in a submodule of $Z$ belonging to $\Add(M)$, but which do not decompose as a direct sum of modules of cardinality strictly smaller than $|Z|$. On the other hand, Proposition~\ref{p:localsplit} essentially implies that this is impossible if $|Z|>|M|$ and $Z$ has an $\Add(M)$-cover.

\begin{rema} If $M$ is a direct sum of countably presented modules, the proof of Theorem~\ref{t:AddEnochs} does not need the extra assumption $(*)$. The point is that if $\lambda = \mu (= \aleph_0)$ happens in this case, then $\kappa = \aleph_0$ works since the set $E\subseteq \aleph_1$ consisting of all limit ordinals is stationary and non-reflecting.

More generally, if it happens in the second paragraph of the proof that $\mu = \lambda$, then we can put $\kappa = \lambda$ and we do not need to use $(*)$ since already the stationary set $E^\prime = \{\alpha<\kappa^+\mid \cf(\alpha) = \kappa\}$ is non-reflecting.
\end{rema}

The following corollary puts Theorem~\ref{t:AddEnochs} into a broader picture of established theory. Moreover, using a technical lemma which we defer to the appendix, we can also present an important instance, generalizing the one mentioned in the first paragraph of the remark above, where the assumption $(*)$ is redundant.

Let us recall that a ring $S$ is \emph{semiregular} if the quotient of $S$ modulo the Jacobson radical is von Neumann regular and idempotents lift modulo the Jacobson radical. An in-depth study of these rings can be found in \cite{N0}. Notice for instance that, if $R$ is the boolean ring from Example~\ref{e:ex}, then $\End_R(R^{(\omega)})$ is not semiregular, cf.\ \cite[Theorem~3.9]{N0}.

\begin{cor}\label{c:broader} Let $M\in\ModR$ and suppose that either:
\begin{enumerate}
	\item[(a)] the assumption $(*)$ holds true, or
	\item[(b)] there exists $n<\omega$ such that $M$ is a direct sum of $\aleph_n$-generated modules.
\end{enumerate}

Then the following conditions are equivalent:
\begin{enumerate}
	\item $M$ has a perfect decomposition.
	\item $\Add(M)$ is closed under direct limits.
	\item $\Add(M)$ is a covering class of modules.
  \item Each module from the class $\varinjlim\Add(M)$ has an $\Add(M)$-cover.
  \item For each cardinal $\kappa$, the ring $\End_R(M^{(\kappa)})$ is semiregular.
\end{enumerate}
\end{cor}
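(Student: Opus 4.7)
My plan is to run the circuit (1) $\Rightarrow$ (2) $\Rightarrow$ (3) $\Rightarrow$ (4) $\Rightarrow$ (1), and to establish (1) $\Leftrightarrow$ (5) on the side. The first three implications are already explicit in the paper and use neither (a) nor (b): (1) $\Rightarrow$ (2) is \cite[Theorem~1.4]{AS0}; (2) $\Rightarrow$ (3) follows from \cite[Theorem~5.31]{GT0}, since $\Add(M)$ is always precovering by the small object argument; and (3) $\Rightarrow$ (4) is immediate.

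For (1) $\Leftrightarrow$ (5) I would appeal to the known link between perfect decompositions and semiregular endomorphism rings. For (1) $\Rightarrow$ (5), by \cite[Corollary~2.3]{GG0} a perfect decomposition of $M$ makes every $N\in\Add(M)$ a direct sum of modules with local endomorphism rings, and the standard theory of such decompositions (as developed in \cite{N0}) then forces $\End_R(N)$ to be semiregular; specialising to $N=M^{(\kappa)}$ yields (5). Conversely, semiregularity of every $\End_R(M^{(\kappa)})$ transfers by the standard corner-ring argument to $\End_R(N)$ for each $N\in\Add(M)$, whereupon any local direct summand of such an $N$ lifts to a genuine direct summand by lifting the corresponding orthogonal family of idempotents modulo the Jacobson radical---exactly the perfect-decomposition property.

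The substantive step is (4) $\Rightarrow$ (1). Under (a) this is precisely Theorem~\ref{t:AddEnochs}, so only (b) requires new work. My plan is to rerun the proof of Theorem~\ref{t:AddEnochs} unchanged and, as in the remark following it, observe that $(*)$ is invoked solely to select a cardinal $\kappa\geq\mu$ carrying a non-reflecting stationary subset $E\subseteq\{\alpha<\kappa^+\mid\cf(\alpha)=\lambda\}$ for the regular $\lambda\leq\mu$ extracted from the hypothetical bad local direct summand. Under (b) one may take $\mu=\aleph_n$, so $\lambda$ ranges over a set of regular cardinals bounded below $\aleph_\omega$, and the technical lemma deferred to the appendix supplies the required $\kappa$ and $E$ in ZFC alone---the natural candidate being $\kappa=\aleph_\omega$, whose countable cofinality makes $\aleph_{\omega+1}$ amenable to Shelah-style constructions of non-reflecting stationary subsets of every prescribed smaller uncountable regular cofinality.

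The main obstacle is exactly this appendix lemma: non-reflecting stationary subsets of $\kappa^+$ of prescribed cofinality are not available in ZFC for arbitrary $\kappa$, but harvesting them uniformly for all regular $\lambda<\aleph_\omega$ at a single $\kappa\geq\aleph_n$ is what the bound $n<\omega$ in (b) is designed to make possible. Dispensing with this bound would settle Enochs' conjecture for all small precovering classes in ZFC, and remains open.
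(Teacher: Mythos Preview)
Your circuit $(1)\Rightarrow(2)\Rightarrow(3)\Rightarrow(4)$ and the implication $(1)\Rightarrow(5)$ are fine and match the paper. There are, however, two genuine gaps.

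\textbf{The direct $(5)\Rightarrow(1)$ does not work.} Semiregularity of $\End_R(N)$ lets you lift a \emph{single} idempotent modulo the Jacobson radical; it does not let you lift an infinite orthogonal family to an orthogonal family summing to an idempotent, which is what turning a local direct summand into a direct summand requires. The paper is explicit about this in the paragraph following the corollary: in ZFC one only knows that $(1)$ is equivalent to $(5)$ \emph{together with} ``$M$ decomposes into modules with local endomorphism ring,'' and whether $(5)$ alone yields this decomposition is open. The paper therefore does not attempt $(5)\Rightarrow(1)$ directly; it proves $(5)\Rightarrow(3)$ in ZFC via \cite[Corollary~2.3]{N0} and \cite[Proposition~4.1]{A0}, and then feeds into $(3)\Rightarrow(4)\Rightarrow(1)$, which is where (a) or (b) is used.

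\textbf{Your plan for $(4)\Rightarrow(1)$ under (b) misidentifies what the appendix lemma does.} Lemma~\ref{l:ZFCind} does \emph{not} manufacture a non-reflecting stationary set so that one can rerun the filtration argument of Theorem~\ref{t:AddEnochs}. It is an entirely different, tree-based construction (using ${}^{\lambda}\kappa$ as index set, in the style of Eklof--Shelah) that directly produces a non-split epimorphism $\pi:N'\to L$ with $N'\in\Add(M)$, $L\in\varinjlim\Add(M)$, locally split kernel, and $\Hom_R(M,\pi)$ surjective; Proposition~\ref{p:localsplit} then finishes. Your proposed alternative---finding in ZFC non-reflecting stationary subsets of $\{\alpha<\aleph_{\omega+1}:\cf(\alpha)=\lambda\}$ for each regular $\lambda<\aleph_\omega$---is not available: by a theorem of Magidor it is consistent, relative to large cardinals, that \emph{every} stationary subset of $\aleph_{\omega+1}$ reflects. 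So the ``natural candidate $\kappa=\aleph_\omega$'' cannot be made to work by the stationary-set method, and the paper's detour through the tree construction is essential, not a convenience.
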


\begin{proof} Concerning the previously known implications that hold without assuming either of the two assumptions (a) and (b), $(1)\Longrightarrow (2)$ follows by \cite[Theorem~1.4]{AS0}. The implication $(2)\Longrightarrow (3)$ is due to Enochs and can be found in \cite[Theorem~5.31]{GT0}. The implication $(3)\Longrightarrow (4)$ is trivial and $(1)\Longrightarrow (5)$ follows from the characterization in \cite[Theorem~1.1]{AS0}. On the other hand, $(5)\Longrightarrow (3)$ follows from \cite[Corollary~2.3]{N0} and \cite[Proposition~4.1]{A0}.

Finally, the implication $(4)\Longrightarrow (1)$ follows by Theorem~\ref{t:AddEnochs} if (a) holds true. If (b) is satisfied and $(1)$ does not hold, there exists $N\in\Add(M)$ which satisfies the assumption of Lemma~\ref{l:ZFCind}, and we use this lemma to obtain a non-split epimorphism $\pi:N^\prime \to L$ whose properties (i)--(iii) imply that $\pi$ is an $\Add(M)$-precover of $L\in\varinjlim\Add(M)$ with $\Ker(\pi)\hookrightarrow N^\prime$ locally split. It follows from Proposition~\ref{p:localsplit} that $L$ cannot have an $\Add(M)$-cover.
\end{proof}

Let us remark that, by \cite[Theorem~1.1]{AS0} and \cite[Corollary~2.3]{N0}, the condition~$(1)$ from Corollary~\ref{c:broader} is equivalent (in ZFC) with the conjunction of the condition~$(5)$ with ``$M$ has a decomposition in modules with local endomorphism ring''. It might be of interest, from the point of view of the theory of decomposition of modules, that $(5)$ alone actually suffices to ensure that $M$ has a decomposition in modules with local endomorphism ring assuming (a) or (b). Also, as already pointed out prior to Theorem~\ref{t:AddEnochs}, we have no example showing that one of the two conditions (a), (b) is actually necessary to prove Corollary~\ref{c:broader}, so it may turn out that $(5)$ yields that $M$ has a decomposition in modules with local endomorphism ring unconditionally.

If this is the case, a proof of this fact would have to use a different technique. Note the following limitation of our approach: the assumption that $\Add(M)$ is covering (which is implied by $(5)$) is used here to show that, if a module $Z$ is the directed union of a $\kappa^+$-directed system of submodules belonging to $\Add(M)$ and $M$ is $\kappa$-generated, then $Z\in\Add(M)$ and the canonical $\kappa^+$-pure epimorphism from the direct sum of submodules onto their directed union $Z$ splits; from this consequent assertion, we are able to deduce that $\Add(M) = \varinjlim\Add(M)$ if (a) or (b) is satisfied. This will not work if a strongly compact cardinal $\kappa >|R|$ is present (which contradicts the assumption $(*)$): the consequent assertion holds true for this~$\kappa$ and $M = R^{(\kappa)}$, as shown in \cite[Theorem~3.3]{ST}, but the class $\Add(M)$ of projective modules is not closed under direct limits unless $R$ is right perfect.

\appendix
\section{Lemma allowing to drop the assumption $(*)$ in some cases}
\label{sec:appendix}

The lemma presented below is needed in Corollary~\ref{c:broader} to show that the assumption $(*)$ in our main result is redundant provided that there exists $n<\omega$ such that $M$ is a direct sum of $\aleph_n$-generated modules. The tree-like construction in the proof of the lemma goes back to \cite{ES}.

In what follows, we say that a local direct summand $K = \bigoplus_{i\in I} N_i$ of a module $N$ is \emph{irredundant} if $K$ is not a direct summand in $N$ and $\bigoplus_{i\in J} N_i$ is a direct summand in $N$ for each $J\subset I$ with $|J|<|I|$.

\begin{lem}\label{l:ZFCind} Assume that $N$ has a local direct summand which is not a direct summand and let $n<\omega$ be arbitrary. There exists a non-split short exact sequence $0 \to G \to N^\prime \overset{\pi}{\to} L \to 0$ such that:
\begin{enumerate}
\item[(i)] $N^\prime$ is a direct sum of copies of $N$ and $L\in\varinjlim\Add(N)$,
\item[(ii)] $\Hom_R(M,\pi)$ is surjective whenever $M$ is $\aleph_n$-generated, and
\item[(iii)] $G\to N^\prime$ is locally split.
\end{enumerate}
\end{lem}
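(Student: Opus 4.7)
The plan is to mimic the tree-based construction of Eklof and Shelah from \cite{ES}, producing in pure ZFC a non-split extension with the stated properties. First, following the second paragraph of the proof of Theorem~\ref{t:AddEnochs}, I reduce to the case of an \emph{irredundant} local direct summand $K = \bigoplus_{i\in I} N_i\subseteq N$, where $\lambda := |I|$ is an infinite regular cardinal and $\bigoplus_{i\in J} N_i$ is a direct summand of $N$ for every $J\subsetneq I$.

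I then build a tree $T$ of height $n+1$ in which each internal node has $\lambda$ children indexed by $I$; to each leaf $\eta$ of $T$ I attach a copy $N_\eta$ of $N$, and set $N' := \bigoplus_{\eta\text{ leaf}} N_\eta$. The submodule $G\subseteq N'$ is defined by iterated pushout along the inclusion $K\hookrightarrow N$: at each internal node $\eta$ at level $k<n+1$, the local direct summand $K_\eta = \bigoplus_{i\in I} N_{\eta,i}$ of a copy of $N$ attached to~$\eta$ is glued, summand by summand, to the corresponding contributions coming from the $|I|$ subtrees rooted at the children of~$\eta$. The quotient $L := N'/G$ then decomposes as the directed union of submodules $L_{T^\prime}$ indexed by proper subtrees $T^\prime\subsetneq T$, each $L_{T^\prime}\in\Add(N)$ because every strict subfamily $\{N_{\eta,i}\}_{i\in J}$ at each node splits off. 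Condition (i) is immediate from this description; condition (iii) holds because any finite subset of $G$ involves only finitely many internal nodes and finitely many summands $N_{\eta,i}$ at each, which jointly form a direct summand of $N'$ by irredundancy applied node by node; condition (ii) follows because an $\aleph_n$-generated map into $L$ factors through some $L_{T^\prime}$ with $T^\prime$ a proper subtree and then lifts along the splitting of $L_{T^\prime}$ as a direct summand of a suitable finite direct sum of leaf copies of $N$.

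The hard part is to control simultaneously the non-splitness of the whole sequence and condition (ii). These requirements pull in opposite directions: condition (ii) demands that subtrees of height $\leq n$ split off, while the whole height-$(n+1)$ extension must remain non-split at the root. This is possible exactly because irredundancy tells us that every \emph{strict} subsum of $K$ splits off but $K$ itself does not — the pushouts along the levels of $T$ must therefore be arranged so that the non-splitness of the full root-level inclusion $K\hookrightarrow N$ genuinely propagates upward through the $n+1$ levels, rather than being trivialised (as a naive pushout can do). The calibration of the tree height to exactly $n+1$ is precisely what permits both properties to hold at once, and the main technical work is the diagram chase showing that a hypothetical splitting of $\pi$ would induce a splitting at the root, contradicting the assumption on~$K$.
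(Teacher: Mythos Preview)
Your outline diverges from the paper's argument in two essential ways, and in both places there is a genuine gap.

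\medskip

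\textbf{Non-splitness.} You write that ``the main technical work is the diagram chase showing that a hypothetical splitting of $\pi$ would induce a splitting at the root''. But this is precisely what one cannot expect: a pushout of a non-split monomorphism can very well split, so there is no reason your iterated pushouts along the tree preserve non-splitness. The paper does \emph{not} argue by descent to the root. Instead it builds the tree ${}^{<\lambda}\kappa$ (height~$\lambda$, branching~$\kappa$) for a carefully chosen $\kappa$ with $|R|+|K|\le\kappa=\kappa^{<\lambda}$ and $\kappa^\lambda=2^\kappa$, sets $N'=N^{({}^\lambda\kappa)}$, and proves non-splitness by a \emph{counting} argument in an $\Ext$-diagram: there are $2^{|T|}$ distinct elements $\varepsilon(\psi_X)\in\Ext^1_R(C^{(T)},K)$ but only $2^\kappa=|T|$ of them can lie in the image of the connecting map $\delta$, so some $\psi_Xg$ fails to factor through~$f$, forcing $\pi$ non-split. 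Your finite-height tree with branching~$\lambda$ carries no such cardinality leverage.

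\medskip

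\textbf{Condition (ii) when $\lambda\le\aleph_n$.} Your tree of height $n{+}1$ with $\lambda$-fold branching has exactly $\lambda$ leaves. If $\lambda\le\aleph_n$, an $\aleph_n$-generated module can hit \emph{all} leaves at once, so there is no proper subtree $T'$ through which the map must factor, and your argument for~(ii) collapses. (Even when $\lambda>\aleph_n$, the finite height is irrelevant: one level already suffices for the splitting of small subfamilies, but then you are back to needing a separate reason for non-splitness.) The paper resolves this differently: after the tree construction it shows $\pi_S$ splits for every $S\subseteq T$ with $|S|\le\lambda$, takes the least $\lambda'$ for which some $\pi_I$ with $|I|=\lambda'$ fails to split, and observes $\lambda'>\lambda$. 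If $\lambda'>\aleph_n$ one is done; otherwise one feeds the new irredundant local direct summand $K'=\Ker(\pi_I)\subseteq N^{(I)}$ (now with index set of size $\lambda'>\lambda$) back into the construction. Since $\lambda$ strictly increases and stays below $\aleph_\omega$, at most $n$ iterations are needed. This bootstrapping, absent from your proposal, is what couples the integer~$n$ to the proof; your use of $n{+}1$ as a tree height does not accomplish the same thing.

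\medskip

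In short, the Eklof--Shelah tree you should be using has height~$\lambda$ and very wide branching~$\kappa$ (chosen via cardinal arithmetic), not height~$n{+}1$ and branching~$\lambda$; non-splitness comes from a counting argument rather than a diagram chase; and the role of~$n$ is to bound the number of iterations of the whole construction, not the height of a single tree.
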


\begin{proof} Let $K = \bigoplus_{i\in I} N_i$ be a local direct summand in $N$ which is not a direct summand. We may suppose that it is irredundant. Then $|I|$ is infinite, and if $|I|$ is singular, we write $I$ as a disjoint union $I = \bigcup_{j\in J} I_j$ where $|J|=\cf(|I|)$ is regular and $|I_j|<|I|$ for each $j\in J$. As in the proof of Theorem~\ref{t:AddEnochs}, we get that $K = \bigoplus_{j\in J} (\bigoplus_{i\in I_j} N_i)$ and this (decomposition of $K$) is still an irredundant local direct summand in $N$. As a result, we can assume without loss of generality that $I = \lambda$ where $\lambda$ is an infinite regular cardinal.

Put $C = N/K$ and let $f:N\to C$ be the (non-split) canonical projection. Take a cardinal $\kappa$ such that $|R|+|K|\leq\kappa = \kappa^{<\lambda}$ and $\kappa^\lambda = 2^\kappa$. Let $T = {}^\lambda \kappa$, i.e.\ the set of all mappings from $\lambda$ into $\kappa$; then, trivially, $|T| = \kappa^\lambda = 2^\kappa$.

Consider the module $N^\prime = N^{(T)}$. For each $\eta\in T$, we denote by $\nu_\eta:N\to N^\prime$ the canonical embedding from $N$ onto the $\eta$th direct summand of $N^\prime$. We also put $K_\alpha = \bigoplus_{\beta<\alpha} N_\beta$ for each $\alpha<\lambda$. In particular, $K_0 = 0$.

Let $G$ be the submodule of $N^\prime$ generated by the set \[\{\nu_\eta(x)-\nu_\zeta(x)\mid \eta,\zeta\in T, (\exists \alpha<\lambda)(x\in K_\alpha\;\&\; \eta\restriction\alpha = \zeta\restriction\alpha)\}.\]
We denote by $\pi:N^\prime\to N^\prime/G$ the canonical projection and, for each $S\subseteq T$, by $\pi_S:N^{(S)}\to N^{(S)}+G/G$ the corresponding restriction of $\pi$. We also put $L = N^\prime/G$ and $L_S = N^{(S)}+G/G$.

\smallskip

First, we claim that $\pi_S$ is a split epimorphism for each $S$ with $|S|\leq\lambda$. This trivially holds for $S = \varnothing$. Otherwise, we use the transfinite induction and the fact that we can well-order the set $S$ by some cardinal number. For the induction step, it then suffices to assume that we have a section $\sigma_{S^\prime}:L_{S^\prime}\to N^{(S^\prime)}$ of $\pi_{S^\prime}$ for some $S^\prime\subset S$ with $|S^\prime|<\lambda$ and we are given an $\eta\in S\setminus S^\prime$. Put $V = S^\prime\cup \{\eta\}$. We want to extend $\sigma_{S^\prime}$ to a section $\sigma_V$ of $\pi_V$. The regularity of $\lambda$ implies that \[\gamma := \sup\{\alpha<\lambda\mid (\exists \zeta\in S^\prime)\,\eta\restriction \alpha = \zeta\restriction\alpha\}<\lambda.\] Since $K$ is irredundant, there is a submodule $C_\gamma\subseteq N$ such that $K_\gamma\oplus C_\gamma = N$. It follows from the definitions of $G$ and $\gamma$ that $\pi\nu_\eta(K_\gamma)\subseteq L_{S^\prime}$ and also that $L_{S^\prime}\cap\pi\nu_\eta(C_\gamma) = 0$ whence $L_V = L_{S^\prime}\oplus\pi\nu_\eta(C_\gamma)$. Finally, since $\pi_{\{\eta\}}$ is one-one, we extend $\sigma_{S^\prime}$ to $\sigma_V$ by sending the direct summand $\pi\nu_\eta(C_\gamma)$ into $N^{(V)}$ via $\pi_{\{\eta\}}^{-1}$.

\smallskip

Second, we claim that $\pi$ is not a split epimorphism. To show this, we first pick for each $u\in{}^{<\lambda}\kappa = \{h:\alpha\to \kappa\mid \alpha<\lambda\}$ some $\eta_u\in T$ which extends~$u$. Since $\kappa^{<\lambda} = \kappa\geq |R|+|K|$, we see that the cardinality of the submodule $F = \sum_{u\in\,{}^{<\lambda}\kappa}\nu_{\eta_u}(K_{\Dom(u)})$ of $N^\prime$ is at most $\kappa$. By the definitions of $F$ and $G$, the canonical projection $N^\prime\to N^\prime/(F+G)$ coincides with the coproduct map $f^{(T)}:N^\prime \to C^{(T)}$. So $f^{(T)} = g\pi$ for the natural projection $g:L\to C^{(T)}$. Put $D = \Ker(g) = F + G/G$.

As in the proof of \cite[Lemma~4.1]{Sa1}, we use the following commutative diagram with exact rows and columns:

\[\begin{CD}
		0				@>>> \Hom_R(C^{(T)},N) @>{\Hom_R(g,N)}>> \Hom_R(L,N) \\
		@.			@V{\Hom_R(C^{(T)}, f)}VV		@V{\Hom_R(L, f)}VV 	\\
	  0				@>>> \Hom_R(C^{(T)},C) @>{\Hom_R(g,C)}>> \Hom_R(L,C) \\	
	 @. 			@V{\varepsilon}VV 		@V{\xi}VV		\\
	\Hom_R(D,K)	@>{\delta}>>	\Ext_R^1(C^{(T)},K)  @>{\Ext_R^1(g,K)}>> \Ext_R^1(L,K).
\end{CD}\]

\medskip

Since $|D|, |K|\leq\kappa$, we have $|\Img(\delta)|\leq |\Hom_R(D,K)|\leq \kappa^\kappa = 2^\kappa = |T|$. For each nonempty $X\subseteq T$, we consider the epimorphism $\psi_X:C^{(T)}\to C$ defined as the identity mapping on the summands indexed by the elements from $X$ and as the zero mapping elsewhere. Since $f$ is not a split epimorphism, any two distinct nonempty sets $X,Y\subseteq T$ provide us with distinct nonzero $\varepsilon(\psi_X),\varepsilon(\psi_Y)\in\Ext_R^1(C^{(T)},K)$. It follows that there exists a nonempty $X\subseteq T$ such that $\varepsilon(\psi_X)\notin \Img(\delta)$. Using the commutativity of the lower ractangle in the diagram above and the exactness of rows and columns, we see that not only $\psi_X$ does not factorize through $f$ but $\psi_X g$ does not factorize through~$f$ either. On the other hand, $\psi_X f^{(T)} = \psi_X g\pi$ trivially factorizes through~$f$, which immediately implies that $\pi$ is not a split epimorphism.

\smallskip

Now we can take the least cardinal $\lambda^\prime$ such that there exists $I\subseteq T$ of cardinality~$\lambda^\prime$ and with the property that $\pi_I$ is not a split epimorphism. By the first part, we know that $\lambda<\lambda^\prime$. If $\lambda^\prime>\aleph_n$, we are done: indeed, $N^\prime$ is a direct sum of copies of $N$; the condition (iii) holds since $\pi_S$ splits for each finite $S\subseteq T$; for the same reason, we get that $L$ is a directed union of modules $\Img(\pi_S)\in\Add(M)$ where $S$ runs through the finite subsets of $T$; finally, the condition (ii) is satisfied since $\pi_S$ is split even for each $S\subseteq T$ of cardinality at most $\aleph_n$.

In the remaining case of $\lambda^\prime\leq\aleph_n$, we fix a well ordering of $I = \{\eta_\alpha\mid \alpha<\lambda^\prime\}$. Put $K^\prime = \Ker(\pi_I)$. By the choice of $\lambda^\prime$, we have the splitting filtration $(K^\prime_\alpha\mid \alpha\leq\lambda^\prime)$ of $K^\prime$ where $K^\prime_\alpha = \Ker(\pi_{I_\alpha})$ and $I_\alpha = \{\eta_\beta\mid \beta<\alpha\}$ for each $\alpha\leq\lambda^\prime$. Thus there is a submodule $N^\prime_\alpha$ of $K^\prime_{\alpha+1}$ such that $K^\prime_{\alpha+1} = K^\prime_\alpha\oplus N^\prime_\alpha$ for each $\alpha<\lambda^\prime$. Then $\bigoplus_{\alpha<\lambda^\prime} N^\prime_\alpha = K^\prime$ is a local direct summand in $N^{(I)}$ and hence also in $N^\prime$. We know that $\lambda^\prime$ is regular since $\lambda^\prime<\aleph_\omega$, and so (the latter decomposition of) $K^\prime$ is irredundant by the choice of $\lambda^\prime$. We redefine $N := N^\prime$, $K := K^\prime$, $\lambda:=\lambda^\prime$ and $N_\alpha := N_\alpha^\prime$ for each $\alpha<\lambda^\prime$ and we return to the beginning of the second paragraph of the proof; this time with a larger $\lambda$. It is clear that this reiteration of the whole process is needed at most $n$ times until we guarantee that $\lambda^\prime>\aleph_n$.
\end{proof}


\bigskip

\end{document}